\documentclass[11pt,leqeq]{article}
\usepackage{amssymb,amsthm}
\usepackage{amsmath}
\usepackage{amscd}
\usepackage[dvips]{graphicx}
\setlength{\topmargin}{-0.2in} \setlength{\oddsidemargin}{0.2in}
\setlength{\textwidth}{6.25in}
\setlength{\textheight}{8.5in}
\setlength{\unitlength}{0.6pt}
\newtheorem{thm}{Theorem}
\newtheorem{lema}[thm]{Lemma}
\newtheorem{prop}[thm]{Proposition} 

\newtheorem{defi}[thm]{Definition}
\newtheorem{exa}[thm]{Example}

\newcommand{\en}{{\rm{End}}}
\newcommand{\des}{\displaystyle}

\setlength{\baselineskip}{16pt}

\newcommand{\sym}{{\rm{Sym}}}

\begin{document}
\date{}

\title{On the $q$-meromorphic Weyl algebra}
\author{Rafael D\'\i az and Eddy Pariguan} \maketitle
\begin{abstract}
We introduce a $q$-analogue $MW_q$ for the meromorphic Weyl algebra, and
study the normalization problem and the symmetric powers $\sym^n(MW_q)$ for such
algebra from a combinatorial viewpoint.
\end{abstract}

\section{Introduction}
Pioneered by Euler, Jacobi, and Jackson among others, the results
and applications of $q$-calculus \cite{RA, Ch} have grown both in
depth and scope, touching by now most branches of mathematics,
including partition theory \cite{AND}, combinatorics
\cite{z1, z2}, number theory
\cite{Kim}, hypergeometric functions \cite{RA}, quantum groups
\cite{kass}, knot theory \cite{Tu}, $q$-probabilities \cite{kuper}, Gaussian
$q$-measure \cite{CTT}, Feynman $q$-integrals \cite{ED3, ED1},
homological algebra \cite{A, Kap}, and category theory
\cite{cas1}. Our goal in this
work is to bring yet another mathematical object into the field of
$q$-calculus, namely, we provide a $q$-analogue for the
meromorphic Weyl algebra $MW$ introduced in \cite{DP2}.
Roughly speaking $MW$ is the algebra generated by $x^{-1}$ and the
derivative $\partial$. The $q$-analogue $MW_q$ of the meromorphic
Weyl algebra is essentially the algebra generated by $x^{-1}$ and
the $q$-derivative $\partial_q$. We focus on the normal
polynomials for $MW_q$ which arise in the problem of writing
arbitrary monomials in $MW_q$ as linear combination of monomials
written in normal form; we provide both explicit formulae and
a combinatorial interpretation for the normal polynomials. We also
study the symmetric powers of $MW_q$ using
the methodology developed in \cite{DP2} and further applied in \cite{RDEP, DR}.\\

Let us say a few words on  $q$-combinatorics. As explained by
Zeilberger in \cite{z2}  a combinatorial interpretation for a
sequence $n_0, n_1, n_2,....$ of non-negative integers, is a
sequence of finite sets $x_0, x_1, x_2,...$ such that $|x_k|=n_k$
for $k \in
\mathbb{N}$. Each sequence of non-negative integers admits a wide
variety of combinatorial interpretations; the art of combinatorics
consists in finding patterns that yield, systematically,
combinatorial interpretations for  families of sequences of
non-negative integers.\\

The field of $q$-combinatorics provides another approach for the
study of natural numbers by combinatorial methods. Let
$\mathbb{N}[q]$ be the semi-ring of polynomials in the variable
$q$ with coefficients in $\mathbb{N}.$ Instead of working with
sequences of finite sets the main object of study in
$q$-combinatorics are sequences $(x_0,\omega_0), (x_1,
\omega_1), (x_2, \omega_2), ...$ of pairs $(x,\omega)$ where $x$
is a finite set and $\omega:x \longrightarrow \mathbb{N}[q]$ is an
arbitrary map. The cardinality of such a pair $(x,\omega)$ is
defined to be
$$|x,\omega|=
\sum_{i \in x}\omega(i) \in \mathbb{N}[q].$$

Notice that the cardinality $|x,\omega|$ of the pair $(x,\omega)$ is not an
integer, but rather a polynomial in the variable $q$ with
non-negative integer coefficients. We say that a sequence of pairs
$(x_0,\omega_0), (x_1, \omega_1), (x_2, \omega_2), \cdots$
provides a combinatorial interpretation for a sequence of
non-negative integers $n_0, n_1, n_2,\cdots$ if $|x_k,
\omega_k|(1)= n_k$ for $k \in \mathbb{N}$, where $|x_k,
\omega_k|(1)$ is the evaluation of the polynomial $|x_k,
\omega_k|$ at $1$. Of course the additional value of
$q$-combinatorics comes from the fact that it is suited to handle
not just sequences in $\mathbb{N}$, but more generally sequences
in $\mathbb{N}[q]$. We say that a sequence $(x_0,\omega_0), (x_1,
\omega_1), (x_2, \omega_2), \cdots$ provides a combinatorial
interpretation for a sequence of polynomials $p_1, p_2,
p_3,\cdots$ in $\mathbb{N}[q]$ if $|x_k,\omega_k|=p_k$ for $k \in
\mathbb{N}.$ One of the most prominent examples is the
$q$-combinatorial interpretation for the $q$-analogues $[n]! \in
\mathbb{N}[q]$ of the factorial numbers $n!$  given by
$$[n]!=\prod_{k=1}^n[k]  \mbox{ \ \ where  \ \ }
[k]=1+\cdots+q^{k-1}.$$

Consider the pair $(S_n,i_n)$ where $S_n$
is the set of permutations of $[[1,n]]=\{1,2,\cdots,n \}$ and
$i_n: S_n \longrightarrow \mathbb{N}[q]$ is the map given by
$i_n(\sigma)=q^{|I_n(\sigma)|}$ where
$$I_n(\sigma)=\{(i,j) \ \ | \ \ 1\leq i < j\leq n \mbox{\  \ and \ }\sigma(i) > \sigma(j)    \} .$$ An inductive argument \cite{AND, ED1} shows that
$|S_n, i_n|=[n]!,$ therefore the sequence $(S_n, i_n)$
provides a combinatorial interpretation for $[n]!$.\\

The rest of this work is organized as follows. In Section $2$ we
summarize some facts on the meromorphic Weyl algebra; we do not
include proofs since all the stated results are consequences,
setting $q=1$, of the corresponding $q$-analogue results proved in
the subsequent sections. The main results of this work are given
in Sections $3$ and $4$ where we introduce $MW_q$ the $q$-analogue
of the meromorphic Weyl algebra, discuss its basic properties,
provide a couple of representations for it, study the normal
polynomials that arise in the process of writing monomials in $MW_q$ in
normal form, and begin the study of the symmetric powers $\sym^n(MW_q)$ of the
$q$-meromorphic Weyl algebra.

\section{The meromorphic Weyl algebra}

The Weyl algebra is the associative algebra over the
field of complex numbers $\mathbb{C}$  given by
$$W=\mathbb{C} \langle x,y \rangle/
\langle yx-xy- 1\rangle$$ where $\mathbb{C}
\langle x,y \rangle$ is the free associative algebra over $\mathbb{C}$ generated by
formal variables $x$ and $y$, and $\langle yx-xy- 1\rangle$ is the
ideal generated by $yx-xy- 1.$ The Weyl algebra comes with a
natural representation $$\rho: W \longrightarrow
End(\mathbb{C}[x]),$$ where  $\mathbb{C}[x]$ is the vector space
of polynomials in the variable $x$ and $End(\mathbb{C}[x])$ is the
algebra of endomorphisms of  $\mathbb{C}[x]$, which explain why it
appears so often in many branches of mathematics and physics. The
map $\rho$ is given on the generators of $W$ by
$$\rho(x)f=xf \mbox{ \ \  and \ \ }
\rho(y)f=\frac{\partial
f}{\partial x}.$$

Notice that in the definition above the letter
$x$ on the left-hand side is a non-commutative variable, while  on
the right-hand side the letter $x$ denotes the generator of
$\mathbb{C}[x]$. This sort of abuse of notation is common in the
literature and we hope it causes no confusion.\\

The meromorphic Weyl algebra $MW$ is the associative algebra over
$\mathbb{C}$ given by
$$MW=\mathbb{C}
\langle x,y
\rangle/
\langle yx-xy- x^{2}\rangle.$$

\noindent $MW$ comes with a natural representation $\rho$ which justifies
its name. Let $C^{\infty}(\mathbb{R}^*)$ be the space of smooth complex valued
functions on the punctured real line
$\mathbb{R}^*=\mathbb{R}\setminus \{0\}$. The representation
$$\rho:MW
\longrightarrow
\en(C^{\infty}(\mathbb{R}^*))$$ is defined by letting the generators of
$MW$ act on $f\in C^{\infty}(\mathbb{R}^*)$ as follows:
$$\rho(x)f=x^{-1}f \mbox{ \ \  and \ \ }
\rho(y)f=-\frac{\partial
f}{\partial x}.$$

An integral analogue of the Weyl algebra is obtained by
considering the operators $l(x)$ and $l(y)$ acting on $f \in
C^{\infty}(\mathbb{R})$ as follows:
$$l(x)f =xf
\mbox{ \ \ and \ \ } l(y)f=\int_0^x f(t)dt.$$ It is not hard to see that $l$ extends
naturally to yield a representation
$$l:\mathbb{C} \langle x,y \rangle/
\langle yx-xy + y^{2}\rangle \longrightarrow  \en(C^{\infty}(\mathbb{R}))$$
of the  algebra $$\mathbb{C} \langle x,y \rangle/
\langle yx-xy + y^{2}\rangle,$$ which is isomorphic to the meromorphic Weyl algebra via
the isomorphism $$t: MW \longrightarrow \mathbb{C} \langle x,y
\rangle/ \langle yx-xy + y^{2}\rangle$$ given on generators by
$t(x)=y$ and $t(y)=x$.   Thus  the map $\iota:MW
\longrightarrow \en(C^{\infty}(\mathbb{R}))$ given on generators by
$$\iota(x)f=\int_0^{\infty} f(t)dt \mbox{ \ \  and \ \ } \iota(y)f=x
f$$ defines a  representation of the meromorphic Weyl algebra. \\

We will use the following notation. For $A=(A_1,\cdots,A_n)\in
(\mathbb{N}^{2})^{n}$ where $A_i=(a_i,b_i)$, we set $a=(a_1,...,a_n)$,
$b=(b_1,...,b_n)$, and
$|A|=(|a|,|b|)=(a_1+\cdots+a_n, b_1+\cdots+b_n).$\\

The normal coordinates $ N(A,k)$ of the monomial
$\prod_{i=1}^nx^{a_i}y^{b_i}\in MW$ are given by
$$\prod_{i=1}^nx^{a_i}y^{b_i}=
\sum_{k=0}^{|b|} N(A,k)x^{|a|+k}y^{|b|-k}.$$ For $k>|b|$ we set $N(A,k)=0$. \\

Given vector $a=(a_1,\cdots,a_n)$ then for $i \in [[1, n-1]]$ we
let $a_{>i}$ be the vector $(a_{i+1},\cdots,a_{n})$. The
increasing factorial \cite{GCRota} is given by $$n^{(k)}=
n(n+1)(n+2)\cdots(n+k-1)$$ for $n \in \mathbb{N}$ and $k \geq 1$
an integer. In the statement of the Theorem \ref{emnc} the
notation $p
\vdash k$ means that $p$ is a vector $(p_1,\cdots,p_{n-1}) \in \mathbb{N}^{n-1}$ such
that $|p|={\sum_{i=1}^{n-1}p_i=k}$.

\begin{thm}\label{emnc}{\em
For $(A,k) \in (\mathbb{N}^{2})^{n} \times \mathbb{N}$  the
following identity holds
$$N(A,k)= {\des\sum_{p\vdash k}{b\choose p}
\prod_{i=1}^{n-1}(|a_{>i}|+|p_{>i}|)^{(p_i)}},$$
where $${b\choose p}=\prod_{i=1}^{n-1}{b_i \choose p_i} .$$}
\end{thm}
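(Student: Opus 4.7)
The plan is to first establish the single-block commutation identity
\[ y^{b}x^{a}\;=\;\sum_{k=0}^{b}\binom{b}{k}\,a^{(k)}\,x^{a+k}\,y^{b-k}, \]
and then induct on $n$ by peeling off the leftmost pair of generators. The commutation identity follows from $yx=xy+x^{2}$: a one-variable induction on $a$ first gives $yx^{a}=x^{a}y+ax^{a+1}$, and a second induction on the exponent of $y$ produces the general formula, with the increasing factorial $a^{(k)}$ appearing because each of the $k$ converted copies of $y$ picks from an $x$-pool that has grown by one since the previous conversion (using Pascal's identity in the induction step). This is the only nontrivial algebraic identity the argument will need.

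For the theorem itself, the base case $n=1$ is immediate: $x^{a_{1}}y^{b_{1}}$ is already in normal form, and the formula correctly returns $N((A_{1}),0)=1$ (empty product) and $0$ for $k>0$ (no $p\in\mathbb{N}^{0}$ has $|p|>0$). For the inductive step I would write the $(n+1)$-pair monomial as $x^{a_{1}}y^{b_{1}}\cdot\prod_{i=2}^{n+1}x^{a_{i}}y^{b_{i}}$, apply the induction hypothesis to the right factor (with pair-sequence $\widetilde{A}=(A_{2},\ldots,A_{n+1})$), and slide $y^{b_{1}}$ through each resulting $x$-block $x^{|\widetilde{a}|+m}$ via the commutation lemma. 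Reading off the coefficient of $x^{|a'|+k}y^{|b'|-k}$ yields the recursion
\[ N(A',k)\;=\;\sum_{p_{1}+m=k}\binom{b_{1}}{p_{1}}\,(|\widetilde{a}|+m)^{(p_{1})}\,N(\widetilde{A},m). \]
Substituting the inductive formula for $N(\widetilde{A},m)$ and regarding the indexing vector $(p_{2},\ldots,p_{n})\vdash m$ as the tail of $p'=(p_{1},p_{2},\ldots,p_{n})\vdash k$ should then collapse the double sum into a single sum over $p'\vdash k$ in $\mathbb{N}^{n}$.

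The step that demands care is the index bookkeeping in the collapse: one must recognise that $|\widetilde{a}|=|a'_{>1}|$ and $m=|p'_{>1}|$, so that the factor $(|\widetilde{a}|+m)^{(p_{1})}$ supplied by the commutation lemma serves precisely as the previously missing $i=1$ term $(|a'_{>1}|+|p'_{>1}|)^{(p_{1})}$ of the product $\prod_{i=1}^{n}(|a'_{>i}|+|p'_{>i}|)^{(p_{i})}$ in the target formula for $N(A',k)$. Once this identification is made the induction closes with no further work. I expect no other substantive obstacle; the combinatorial picture, in which $p_{i}$ counts how many of the $b_{i}$ copies of $y$ in the $i$-th block get converted to $x$'s during normalization and $(|a_{>i}|+|p_{>i}|)^{(p_{i})}$ records the ordered choices of target positions in the growing pool of $x$'s to the right, serves as a useful interpretation and sanity check rather than a separate proof.
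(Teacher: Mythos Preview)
Your argument is correct and follows essentially the same route as the paper: establish the single-block commutation $y^{b}x^{a}=\sum_{k}\binom{b}{k}a^{(k)}x^{a+k}y^{b-k}$ and then iterate block by block, which is exactly how the paper obtains Theorem~\ref{juju} (``using Definition~\ref{nc} several times'') and then specializes to $q=1$ via the closed form $c(a,b,k)\big|_{q=1}=\binom{a}{k}b^{(k)}$ from Theorem~\ref{sm}. The only cosmetic difference is that the paper passes through the $q$-analogue first, whereas you work directly at $q=1$; the inductive bookkeeping you describe (identifying $|\widetilde{a}|=|a'_{>1}|$ and $m=|p'_{>1}|$ so that the commutation factor supplies the missing $i=1$ term) is precisely what makes the iteration close in either setting.
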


The numbers $N(A,k)$ have a nice combinatorial meaning. Let
$E_1,\dots,E_n,F_1,\dots,F_n$ be disjoint sets such that
$|F_i|=a_i$, $|E_i|=b_i$ for $i\in [[1,n]]$, and set $E=\sqcup
E_i$, $F=\sqcup F_i$. Let $M_k$ be the set whose elements are maps
$f:F\longrightarrow \{\mbox{ subsets of }E \ \}$ such that:
\begin{itemize}
\item $f(x)\cap f(y)=\emptyset$ for $x,y \in F;$
\item if $y\in f(x), \ x\in F_i, \  y\in E_j,$ then   $j<i;$
\item $\sum_{a\in F} |f(a)|=k.$
\end{itemize}

The sets $M_k$ provide a combinatorial interpretation for the
numbers $N(A,k)$, that is $$|M_k|=N(A,k).$$ Figure
$\ref{fig:mero}$ illustrates the combinatorial interpretation for
$N(((2,3), (3,3), (3,4)),6):$ it shows an example of
a map contributing to $N(((2,3), (3,3), (3,4)),6)$.
\begin{figure}[h]
\begin{center}
\includegraphics[width=2.5in]{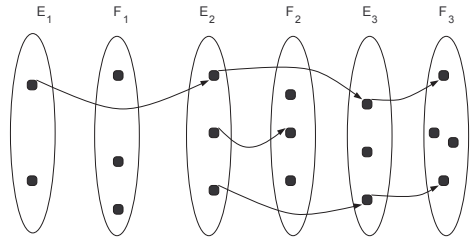}
\caption{Combinatorial interpretation of $N(((2,3), (3,3), (3,4)), 6)$.
\label{fig:mero}}
\end{center}
\end{figure}

Applying Theorem $\ref{emnc},$ specialized in the representation
$\rho$, to $x^{-t} \in C^{\infty}(\mathbb{R}^*)$  we obtain for $(a,b,t)\in
\mathbb{N}^{n}\times
\mathbb{N}^{n} \times \mathbb{N}_+$ the following identity:
$$\prod_{i=1}^{n} (t+|a_{>i}|+|b_{>i}|)^{(b_i)}=\sum_{p\vdash k}{b\choose p}
\prod_{i=1}^{n-1}(|a_{>i}|+ |p_{>i}|)^{(p_i)}t^{(|b|-k)}.$$

This identity is thus an easy corollary of Theorem
\ref{emnc}; however guessing or even proving it directly could
be a bit of a pain. Applying Theorem $\ref{emnc}$, specialized in
the representation $\iota$, to $x^{t}$  we get another quite
intriguing identity:
$$\frac{1}{\prod_{i=1}^{n}(t + |a_{>i}| + |b_{\geq i}| + 1)^{(a_i)} }= \sum_{p\vdash k}{b\choose p}
\prod_{i=1}^{n-1}\frac{(|a_{>i}|+
|p_{>i}|)^{(p_i)}}{(t+|b|-k+1)^{(|a|+k)}}.$$

A fundamental yet not fully appreciated fact in algebra is that
one can associate with each associative algebra $A$ a family of
associative algebras $Sym^n(A)$ indexed by the natural numbers $n
\in \mathbb{N}$. Formally, let $\mathbb{C}$-$alg$ be the category of
associative complex algebras. For $n\geq 1$ consider
$$\sym^n:\mathbb{C}\mbox{-}alg \longrightarrow
\mathbb{C}\mbox{-}alg$$  the functor sending an algebra $A$ into its $n$-th
symmetric power given by
$$\sym^n(A)=A^{\otimes n}/\langle a_{1}\otimes\cdot\cdot\cdot\otimes
a_{n}-a_{\sigma^{-1}(1)}\otimes\cdot\cdot\cdot\otimes
a_{\sigma^{-1}(n)}\ | \ a_{i}\in A,\ \sigma\in S_n\rangle.$$

Given $a_1 \otimes ... \otimes a_n \in A^{\otimes n}$ we denote by $\overline{a_1 \otimes ... \otimes a_n}$ the corresponding element in $\sym^n(A).$ The rule  for the product of $m$ elements in $\sym^n(A)$, see \cite{DP2},
is given as follows: let $a_{ij} \in A$ for $(i,j)
\in [[1,m]]\times[[1,n]]$, then we have that
$$n!^{m-1}\prod_{{i=1}}^{m}
\overline{\bigotimes_{{j=1}}^{n}a_{ij}}=\sum_{\sigma\in
\{1\}\times S_n^{m-1}}\overline{\bigotimes_{{j=1}}^{n}
\prod_{{i=1}}^{m}a_{i\sigma_{{i}}^{-1}(j)}},$$
where $1$ denotes the identity permutation.\\

To our knowledge the symmetric powers have been fully studied only
for a few algebras: for the algebra of polynomials
whose symmetric powers may be identified with the algebra of
symmetric polynomials; and for the algebra of matrices whose
symmetric powers may be identified with the so called Schur
algebras \cite{DP2}. The symmetric powers of the Weyl algebra and
its $q$-analogues are studied in \cite{DP2, RDEP}, the symmetric
powers of the linear Boolean algebras are studied in
\cite{DR}.\\

Let $\sym^{n}(MW)$ be the $n$-symmetric
power of the meromorphic Weyl algebra. An explicit formulae for the
product of $m$ elements in $\sym^{n}(MW)$ is provided next. We denote the element
$$\overline{x^{a_1}y^{b_1}\otimes ... \otimes x^{a_n}y^{b_n}}\in \sym^{n}(MW)
\mbox{\  \ by \ \ } \overline{\prod_{j=1}^{n} x_j^{a_{j}}y_j^{b_{j}}}. $$

\begin{thm}{\em
For each map $(a,b):[[1,m]]\!\times\! [[1,n]]\longrightarrow \mathbb{N}^{2}$
the following identity holds in $\sym^{n}(MW):$
\begin{equation*}
{\des (n!)^{m-1}\prod_{i=1}^{m}\overline{\prod_{j=1}^{n}
x_j^{a_{ij}}y_j^{b_{ij}}}=\sum_{\sigma,k,p}\left(\prod_{l=1}^{m-1}\prod_{j=1}^{n} {
b_j^{\sigma} \choose p^{j} }
(|{(a_{j}^{\sigma})}_{>l}|+|p^{j}_{>l}|)^{(p_l^{j})}\right)
\overline{\prod_{j=1}^{n} x_j^{|a_j^{\sigma}| + k_j}y_j^{|b_j^{\sigma}|-k_j}}.}
\end{equation*}

In the formula above we are using the following conventions:
$\sigma\in {\{1 \} \times S_n^{m-1}}$, $k\in\mathbb{N}^{n}$ is such that
$k_j \leq |b_j^{\sigma}|$, $p=(p^1,...,p^{n})\in (\mathbb{N}^{m-1})^{n}$, $p^j=( p^j_1,...,p_{m-1}^j )$,
$a_j^{\sigma}=(a_{1\sigma_1^{-1}(j)},..., a_{m\sigma_m^{-1}(j)}),$ and
$b_j^{\sigma}=(b_{1\sigma_1^{-1}(j)},..., b_{m\sigma_m^{-1}(j)})$

}
\end{thm}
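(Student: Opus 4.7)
The plan is to reduce this theorem to two inputs we already have in hand: the general formula for the product of $m$ elements in $\sym^{n}(A)$ recalled earlier in the excerpt, and the explicit normal-coordinate formula of Theorem \ref{emnc} applied inside each of the $n$ tensor factors. The strategy is simply to compose these two expansions carefully, then relabel indices.

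First I would apply the generic $\sym^{n}(A)$ product formula with $a_{ij}=x^{a_{ij}}y^{b_{ij}}\in MW$. This gives
\begin{equation*}
(n!)^{m-1}\prod_{i=1}^{m}\overline{\prod_{j=1}^{n}x_j^{a_{ij}}y_j^{b_{ij}}}
=\sum_{\sigma\in\{1\}\times S_n^{m-1}}\overline{\bigotimes_{j=1}^{n}\prod_{i=1}^{m}x^{a_{i\sigma_i^{-1}(j)}}y^{b_{i\sigma_i^{-1}(j)}}}.
\end{equation*}
Inside the $j$-th tensor slot the factor is an ordinary $m$-fold monomial in $MW$, with exponent data $A_j^{\sigma}=\bigl((a_{i\sigma_i^{-1}(j)},b_{i\sigma_i^{-1}(j)})\bigr)_{i=1}^{m}$, i.e.\ with row vectors $a_j^{\sigma}$ and $b_j^{\sigma}$ as defined in the statement.

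Next I would invoke the normal-coordinate expansion (definition of $N$) to write, for each fixed $\sigma$ and each $j\in[[1,n]]$,
\begin{equation*}
\prod_{i=1}^{m}x^{a_{i\sigma_i^{-1}(j)}}y^{b_{i\sigma_i^{-1}(j)}}
=\sum_{k_j=0}^{|b_j^{\sigma}|}N(A_j^{\sigma},k_j)\,x^{|a_j^{\sigma}|+k_j}y^{|b_j^{\sigma}|-k_j},
\end{equation*}
and then substitute the closed form of Theorem \ref{emnc} for each $N(A_j^{\sigma},k_j)$, which introduces an inner compositional summation index $p^{j}=(p^{j}_{1},\dots,p^{j}_{m-1})\vdash k_j$. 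Multiplying the $n$ such expansions together (the tensor-slot products commute with the scalar coefficients) yields a tensor factor of the form $\overline{\prod_{j=1}^{n}x_j^{|a_j^{\sigma}|+k_j}y_j^{|b_j^{\sigma}|-k_j}}$ weighted by the product $\prod_{j=1}^{n}N(A_j^{\sigma},k_j)$.

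Finally I would interchange the order of summation, grouping the $n$ independent indices $k_j$ into a single vector $k\in\mathbb{N}^{n}$ with $k_j\leq|b_j^{\sigma}|$, and the $n$ independent compositions $p^{j}$ into $p=(p^{1},\dots,p^{n})\in(\mathbb{N}^{m-1})^{n}$ with $|p^{j}|=k_j$. Writing out $\prod_{j=1}^{n}N(A_j^{\sigma},k_j)$ via Theorem \ref{emnc} reproduces exactly the coefficient
\begin{equation*}
\prod_{l=1}^{m-1}\prod_{j=1}^{n}\binom{b_j^{\sigma}}{p^{j}}\bigl(|(a_j^{\sigma})_{>l}|+|p^{j}_{>l}|\bigr)^{(p_l^{j})},
\end{equation*}
which matches the right-hand side. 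The only real obstacle is notational bookkeeping: one has to be sure that the $\sigma$-indexing of $a_j^{\sigma}$ and $b_j^{\sigma}$ is consistent across the two expansions, that the constraint $k_j\leq|b_j^{\sigma}|$ is the one inherited from Theorem \ref{emnc} (it is, because $N(A,k)=0$ for $k>|b|$), and that commuting the product over $j$ with the sums over $k_j$ and $p^{j}$ is legitimate — which it is, since for fixed $\sigma$ the data in distinct tensor slots are independent.
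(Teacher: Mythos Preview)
Your argument is correct and is essentially the approach the paper takes. The paper does not give a separate proof of this statement; it declares that all results in Section~2 follow by setting $q=1$ in the corresponding $q$-analogue results, and the $q$-analogue theorem in Section~4 is obtained precisely by combining the general $\sym^{n}(A)$ product rule with the normal-coordinate formula (there Theorem~\ref{juju} in place of Theorem~\ref{emnc}) inside each tensor slot---exactly the two-step expansion you carry out directly for $MW$.
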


The next  example shows the high computational power required
to compute even the simplest products in the symmetric powers of
the meromorphic Weyl algebra.
\begin{exa}{\em For
$n=2, m=2$ we have
$$2(x_1y_1^2x_2^2y_2^2)(x_1^2y_1x_2y_2^2)=
x_1^3y_1^4x_2^3y_2^4+6x_1^3y_1^4x_2^4y_2^3+8x_1^3y_1^4x_2^5y_2^2+
+8x_1^4y_1^3x_2^4y_2^3$$
$$+20x_1^4y_1^3x_2^5y_2^2+6x_1^5y_1^2x_2^3y_2^4+12x_1^5y_1^2x_2^5y_2^2
+x_1^3y_1^4x_2^4y_2^4+2x_1^3y_1^4x_2^5y_2^3+6x_1^3y_1^4x_2^6y_2^2$$
$$
+2x_1^4y_1^3x_2^4y_2^4+4x_1^4y_1^3x_2^5y_2^3+12x_1^4y_1^3x_2^6y_2^3+6x_1^5y_1^2x_2^4y_2^4
+12x_1^5y_1^2x_2^5y_2^3+36x_1^5y_1^2x_2^6y_2^2 .$$}
\end{exa}

\section{The $q$-meromorphic Weyl algebra}

In this section we introduce the $q$-meromorphic Weyl algebra and
discuss some of its basic properties. Let us first review a few
basic notions of $q$-calculus; the interested reader may consult
\cite{Ch, So, CTT} for further information.
Let $M(\mathbb{R}^*)$ be the space of complex value functions
defined on the punctured real line $\mathbb{R} \setminus \{0 \}$ and fix a positive real number
$0< q <1$. The $q$-derivative $$\partial_q: M(\mathbb{R}^*)
\longrightarrow M(\mathbb{R}^*)$$ is given by
$$\displaystyle{\partial_q f=\frac{I_q f-
f}{(q-1)x}},$$ where $I_q f(x)=f(qx)$ for $x\in\mathbb{R}^*.$

\begin{defi}{\em
The $q$-meromorphic Weyl is the algebra given by
$$MW_q=\mathbb{C}\langle x,y\rangle[q]/\langle
yx-qxy-x^2\rangle,$$ where $\mathbb{C}\langle x,y\rangle[q]$ is
the free associative algebra generated by the non-commuting
variables $x,y$ and the commutative variable $q$. }
\end{defi}

Notice that in the definition above $q$ is used as a formal variable rather than a number.
It should always be clear from the context whether we are using $q$ as a formal variable or as a number.
Next result explains how the algebra $MW_q$ arises  in
$q$-calculus. For our next result we make use of the $q$-Leibnitz rule

$$\partial_q(fg)= f \partial_q g + I_q g \partial_q f.$$

\begin{thm}\label{rmw}{\em
a
The map $\rho:MW_q \longrightarrow \en(M(\mathbb{R}^*))$ given on
generators by
$$\rho(x)f=x^{-1}f, \ \ \
\rho(y)f=-q^{-1}\partial_{q^{-1}}f, \mbox{\ \  and \ } \rho(q)f=qf$$ for $f\in
M(\mathbb{R}^*)$ defines a representation of $MW_q$.}
\end{thm}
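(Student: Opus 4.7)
The plan is to verify that $\rho$ descends to the quotient $MW_q$, i.e., that the image of the defining relation $yx - qxy - x^2$ vanishes in $\en(M(\mathbb{R}^*))$. Since $\rho(q)$ is multiplication by the scalar $q$, it commutes with every linear operator, so the relations expressing the centrality of $q$ are automatic. The entire verification thus reduces to showing $\rho(y)\rho(x) - q\,\rho(x)\rho(y) = \rho(x)^2$ as operators on $M(\mathbb{R}^*)$.

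Two small ingredients are needed: the $q$-Leibnitz rule (applied with parameter $q^{-1}$ in place of $q$) and the auxiliary derivative $\partial_{q^{-1}}(x^{-1}) = -q x^{-2}$, which follows directly from the definition since $(q-1)/(q^{-1}-1) = -q$. Applying the Leibnitz rule with $g = x^{-1}$ then yields
$$\partial_{q^{-1}}(x^{-1}f) = x^{-1}\partial_{q^{-1}}f - q\, x^{-2} I_{q^{-1}}f,$$
so that
$$\rho(y)\rho(x)f = -q^{-1}x^{-1}\partial_{q^{-1}}f + x^{-2}I_{q^{-1}}f, \qquad q\,\rho(x)\rho(y)f = -x^{-1}\partial_{q^{-1}}f.$$

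Subtracting, the two $x^{-1}\partial_{q^{-1}}f$ contributions combine with coefficient $1 - q^{-1}$; substituting back the definition $\partial_{q^{-1}}f = (I_{q^{-1}}f - f)/((q^{-1}-1)x)$ converts that combination into $(f - I_{q^{-1}}f)/x^2$, which cancels the leftover $x^{-2}I_{q^{-1}}f$ term and leaves exactly $x^{-2}f = \rho(x)^2 f$. The only real obstacle is careful bookkeeping of the signs and of the powers of $q^{\pm 1}$; in particular it is the simplification $(1-q^{-1})/(q^{-1}-1) = -1$ that makes the non-polynomial contributions disappear and exhibits the relation, after which nothing further is required.
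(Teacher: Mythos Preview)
Your proof is correct and follows essentially the same route as the paper: verify the single relation $\rho(y)\rho(x)=q\rho(x)\rho(y)+\rho(x)^2$ using the $q^{-1}$-Leibnitz rule together with $\partial_{q^{-1}}(x^{-1})=-qx^{-2}$. The only difference is the order of factors in Leibnitz: the paper writes $\partial_{q^{-1}}(f\cdot x^{-1})=f\,\partial_{q^{-1}}(x^{-1})+I_{q^{-1}}(x^{-1})\,\partial_{q^{-1}}f$, which puts the shift on $x^{-1}$ rather than on $f$ and yields $-x^{-1}\partial_{q^{-1}}f+x^{-2}f$ directly, so no $I_{q^{-1}}f$ term appears and your final cancellation step becomes unnecessary.
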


\begin{proof}
We must prove that $$\rho(y)\rho(x)f= q
\rho(x)\rho(y)f+\rho(x^2)f.$$ Since $\partial_{q^{-1}}x^{-1}=-qx^{-2}$ we find
that
\begin{eqnarray*}
\rho(y)\rho(x)f&=&\rho(y)(x^{-1}f)=-q^{-1}
\partial_{q^{-1}}(x^{-1}f)\\
\mbox{} &=&-q^{-1}(q^{-1}x)^{-1}
\partial_{q^{-1}}f - q^{-1}f\partial_{q^{-1}}(x^{-1})\\
\mbox{} &=&-x^{-1}\partial_{q^{-1}}f +x^2f\\
\mbox{} &=&q\rho(x)\rho(y)f + \rho(x^2)f.
\end{eqnarray*}
\end{proof}

Recall \cite{Ch} that the Jackson integral of a map $f:
\mathbb{R} \longrightarrow \mathbb{R}$ is given by
$$\int_0^x f(t)d_qt = (1-q)x\sum_{n=0}^{\infty}q^nf(q^n x).$$

A non-fully exploited feature of the Jackson integral is that it
satisfies a twisted form of the Rota-Baxter identity \cite{cas1,
DPz, GCRota}; indeed one can show that
$$\left(\int_0^x f(s)d_qs \right)\left(\int_0^x g(t)d_qt \right)=
\int_0^x \left(\int_0^t f(s)d_qs \right)g(t)d_qt +
\int_0^x f(t)\left(\int_0^{qt} g(s)d_qs \right)d_qt.$$

It is not hard to check that the Jackson integral is a right inverse operator  for the
$q$-derivative, that is $$\partial_q\int_0^{x} f(t)d_qt=f(x).$$

From the $q$-Leibnitz rule and the fundamental theorem of $q$-calculus
one obtains the $q$-integration by parts formula
$$\int_0^x I_qf\partial_q gd_qt=f(x)g(x) -f(0)t(0)-
\int_0^x g\partial_qf d_qt.$$

In particular setting $$f(x)=x \mbox{\ \ and \ \ } g(x)= \int_0^x
f(t) d_qt$$ we obtain the relation $$x\int_0^x f d_qt = q \int_0^x
tf d_qt +
\int_0^x\int_0^t f d_qs d_qt.$$

Let $I(\mathbb{R})$ be a space of functions on the real line
closed under Jackson integration and under multiplication by
polynomial functions. The previous considerations give  the
following result.

\begin{thm}\label{rmw}{\em   The map
 $$\iota:MW_q \longrightarrow
\en(I(\mathbb{R}))$$ given on generators by
$$\iota(x)f=\int_0^x f d_qt,  \ \ \
\iota(y)f=xf,  \mbox{\ \  and \ } \iota(q)f=qf,$$ for $f\in I(\mathbb{R})$ defines a representation of $MW_q$.}
\end{thm}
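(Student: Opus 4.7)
The plan is to verify that $\iota$ respects the defining relation of $MW_q$, namely $yx - qxy - x^2 = 0$. Since $\mathbb{C}\langle x,y\rangle[q]$ is free, any assignment of values to the generators extends to an algebra homomorphism from the free algebra, so it suffices to check that
$$\iota(y)\iota(x)f = q\,\iota(x)\iota(y)f + \iota(x^2)f$$
for every $f \in I(\mathbb{R})$, together with the trivial fact that $\iota(q)$ commutes with $\iota(x)$ and $\iota(y)$ (it is just scalar multiplication by the formal parameter $q$).

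Next I would compute the three terms separately. Applying the definitions directly,
$$\iota(y)\iota(x)f = x\int_0^x f(t)\,d_qt, \qquad \iota(x)\iota(y)f = \int_0^x tf(t)\,d_qt,$$
and
$$\iota(x^2)f = \iota(x)\iota(x)f = \int_0^x\!\!\int_0^t f(s)\,d_qs\,d_qt.$$
Substituting these into the desired relation reduces the claim to the identity
$$x\int_0^x f(t)\,d_qt = q\int_0^x tf(t)\,d_qt + \int_0^x\!\!\int_0^t f(s)\,d_qs\,d_qt,$$
which is precisely the consequence of the $q$-integration by parts formula derived in the paragraph preceding the theorem (taking $f(x)=x$ and $g(x)=\int_0^x f\,d_qt$ there).

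There is no real obstacle here: the heavy lifting has already been done in the prior discussion, where the twisted Rota--Baxter identity and the $q$-integration by parts formula were established. The only subtlety worth a brief comment is that $\iota$ must take values in endomorphisms of $I(\mathbb{R})$, so one should remark that $I(\mathbb{R})$ was chosen to be closed under Jackson integration and under multiplication by polynomials, guaranteeing that $\iota(x)$ and $\iota(y)$ preserve $I(\mathbb{R})$ and hence that the composites above are well-defined.
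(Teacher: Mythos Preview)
Your proposal is correct and follows exactly the approach the paper intends: the paper does not give a separate proof but simply says ``The previous considerations give the following result,'' referring to the identity $x\int_0^x f\,d_qt = q\int_0^x tf\,d_qt + \int_0^x\int_0^t f\,d_qs\,d_qt$ derived just before the theorem. Your write-up makes this reduction explicit and adds the well-definedness remark about $I(\mathbb{R})$, which is entirely appropriate.
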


We order the generators of $MW_q$ as  $q<x<y$. A monomial in
$MW_q$ of the form $q^a x^b y^c$ is said to be in normal form. One
can show that the set monomials in normal form is a basis for
$MW_q$. Recall from the introduction that we are writing
$[n]=1+...+q^{n-1}$ for an integer $n \geq 1$.

\begin{lema}\label{p1}{\em For $n \geq 1$ the identity $yx^n=q^nx^ny + [n]x^{n+1}$ holds in $MW_q$.}
\end{lema}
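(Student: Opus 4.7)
The plan is a straightforward induction on $n$ using the defining relation $yx = qxy + x^2$ of $MW_q$ and the telescoping identity $[n]+q^n = [n+1]$ for the $q$-integers.

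For the base case $n=1$, the identity reduces to $yx = qxy + x^2$, which is exactly the defining relation since $[1]=1$. For the inductive step, assuming $yx^n = q^n x^n y + [n]x^{n+1}$, I would compute
\begin{align*}
yx^{n+1} &= (yx^n)x = \bigl(q^n x^n y + [n] x^{n+1}\bigr)x \\
 &= q^n x^n (yx) + [n] x^{n+2} \\
 &= q^n x^n (qxy + x^2) + [n] x^{n+2} \\
 &= q^{n+1} x^{n+1} y + \bigl(q^n + [n]\bigr) x^{n+2},
\end{align*}
and then observe that $q^n + [n] = q^n + (1 + q + \cdots + q^{n-1}) = [n+1]$, which closes the induction.

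There is really no serious obstacle here; the only thing to be careful about is the arithmetic on the $q$-integers, namely recognizing $[n]+q^n=[n+1]$ so that the right-hand side assembles into the claimed form. This lemma will presumably serve as the atomic commutation rule used repeatedly in the normal-form computations of the following section, so it is worth stating and proving it cleanly before tackling the general normalization theorem.
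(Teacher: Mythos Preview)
Your proof is correct and follows exactly the same induction as the paper's own argument, computing $yx^{n+1}=(yx^n)x$ and using the defining relation $yx=qxy+x^2$ together with $[n]+q^n=[n+1]$. The only difference is that you spell out the $q$-integer identity explicitly, whereas the paper leaves it implicit in the final step.
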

\begin{proof}
For $n=1$ we get $yx=qxy + x^2$. By induction we have that
$$yx^{n+1}=yx^{n}x=(q^nx^ny + [n] x^{n+1})x=q^nx^n(yx) + [n] x^{n+1}x=q^{n+1}x^{n+1}y + [n+1] x^{n+2}.$$
\end{proof}

\begin{defi}\label{nc}{\em Let $(a,b) \in \mathbb{N}$  and $0 \leq k \leq
a$. The normal coordinates $c(a,b,k)$ are the elements of  $\mathbb{N}[q]$ given
by the following identity in $MW_q$:
$$y^ax^b=\sum_{k=0}^a c(a,b,k)x^{b+k}y^{a-k}.$$ For $k > a$ we set $c(a,b,k)=0$. Notice that by definition $c(0,b,k)= \delta_{0,k}$ where
$\delta$ is Kronecker's delta function.}
\end{defi}

\begin{prop}\label{ayuda}{\em The following identities hold in $MW_q$:
\begin{enumerate}
\item{$c(a+1,b,k)=c(a,b,k)q^{b+k} + c(a,b,k-1)[b+k-1]$ for $1\leq k \leq a$.}
\item{$c(a+1,b,0)=c(a,b,0)q^b$.}
\item{$c(a+1,b,a+1)=c(a,b,a)[b+a].$}
\end{enumerate}}
\end{prop}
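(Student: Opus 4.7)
The plan is to prove all three identities in one stroke by computing $y^{a+1}x^b = y \cdot (y^a x^b)$ in two different ways and matching coefficients in the normal-form basis.

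First I would start with the defining expansion
$$y^{a+1}x^b = y \cdot y^a x^b = \sum_{k=0}^{a} c(a,b,k)\, y\, x^{b+k} y^{a-k},$$
then apply Lemma \ref{p1} to each factor $yx^{b+k}$, which replaces it with $q^{b+k}x^{b+k}y + [b+k]x^{b+k+1}$. This yields two sums,
$$y^{a+1}x^b = \sum_{k=0}^{a} c(a,b,k)\, q^{b+k}\, x^{b+k} y^{a+1-k} \;+\; \sum_{k=0}^{a} c(a,b,k)\,[b+k]\, x^{b+k+1} y^{a-k}.$$

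Next I would reindex the second sum by setting $k' = k+1$, so that it becomes $\sum_{k'=1}^{a+1} c(a,b,k'-1)[b+k'-1]\, x^{b+k'} y^{a+1-k'}$. Combining both sums over a common index $k$ running from $0$ to $a+1$ (with the conventions $c(a,b,-1)=0$ and $c(a,b,a+1)=0$ from Definition \ref{nc}) gives
$$y^{a+1}x^b = \sum_{k=0}^{a+1}\Bigl( c(a,b,k)\, q^{b+k} + c(a,b,k-1)\,[b+k-1]\Bigr) x^{b+k} y^{a+1-k}.$$

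Finally, comparing with the defining expansion $y^{a+1}x^b = \sum_{k=0}^{a+1} c(a+1,b,k)\, x^{b+k} y^{a+1-k}$ and invoking the fact, stated just before Lemma \ref{p1}, that the normal monomials $q^a x^b y^c$ form a basis of $MW_q$, I would read off the coefficients: identity (1) arises for $1 \le k \le a$ where both terms contribute; identity (2) is the $k=0$ case, where the $c(a,b,-1)$ term vanishes; and identity (3) is the $k=a+1$ case, where the $c(a,b,a+1)$ term vanishes. There is no real obstacle here beyond careful bookkeeping of the boundary indices; the only thing to be cautious about is ensuring the vanishing conventions are applied consistently so that the three cases split cleanly from the single master recursion.
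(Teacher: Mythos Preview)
Your proof is correct and follows essentially the same approach as the paper: both compute $y^{a+1}x^b = y\cdot(y^a x^b)$, apply Lemma~\ref{p1} to each $yx^{b+k}$, and then compare coefficients in the normal-form basis. The only cosmetic difference is that you package everything into a single master recursion (using the vanishing conventions at $k=-1$ and $k=a+1$) and then read off the three cases, whereas the paper separates the boundary terms explicitly during the computation.
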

\begin{proof}
By Lemma \ref{p1} and Definition \ref{nc} we have
$$yx^b=\sum_{k=0}^1c(1,b,k) x^{b+k}y^{1-k}=q^bx^by + [b]x^{b+1},$$
thus $c(1,b,0)=q^b$ and $c(1,b,1)=[b]$. On the other hand we
compute
\begin{eqnarray*}
y^{a+1}x^b&=& \sum_{k=0}^a c(a,b,k)(yx^{b+k})y^{a-k} \\
\mbox{}&\mbox{}&\mbox{}\\
\mbox{}&=& \sum_{k=0}^a c(a,b,k)(q^{b+k}x^{b+k}y + [b+k]x^{b+k+1}) y^{a-k}\\
\mbox{}&=&c(a,b,0)q^bx^by^{a+1}+ \sum_{k=1}^a
c(a,b,k)q^{b+k}x^{b+k}y^{a+1-k}\\
\mbox{}&\mbox{}&  + \sum_{k=1}^a
c(a,b,k-1)[b+k-1]x^{b+k}y^{a+1-k}+c(a,b,a)[b+a]x^{a+b+1}.
\end{eqnarray*}

By definition we have that
$$y^{a+1}x^b=\sum_{k=0}^{a+1} c(a+1,b,k)x^{b+k}y^{a+1-k}.$$
Therefore we have shown that

\begin{eqnarray*}
\sum_{k=0}^{a+1}
c(a+1,b,k)x^{b+k}y^{a+1-k}&=& c(a,b,0)q^bx^by^{a+1}\\
&+& \sum_{k=1}^a \left(c(a,b,k)q^{b+k} + c(a,b,k-1)[b+k-1] \right)x^{b+k}y^{a+1-k}\\
&+& c(a,b,a)[b+a]x^{a+b+1}.
\end{eqnarray*}
Considering this equality termwise gives the desired identities.

\end{proof}

Notice that the first identity from Proposition \ref{ayuda} together with
the initial conditions $c(0,b,k)= \delta_{0,k}$ completely determine the function
$c(a,b,k).$  We shall use this fact in the proof of Theorem \ref{sm}. Our next result shows that  $c(a,b,a)$ is the $q$-analogue
of the increasing factorial.

\begin{lema}{\em
\begin{enumerate}
\item{$c(a,b,0)=q^{ab}$.}
\item{$c(a,b,a)=[b][b+1]\dots[b+a-1]=[b]^{(a)}$.}
\end{enumerate}}
\end{lema}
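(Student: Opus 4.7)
The plan is to prove both identities by a straightforward induction on $a$, using the boundary recurrences from Proposition \ref{ayuda} together with the initial condition $c(0,b,k) = \delta_{0,k}$.

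For part (1), I would use recurrence (2) of Proposition \ref{ayuda}, namely $c(a+1,b,0) = c(a,b,0)\, q^b$. The base case $a=0$ is immediate since $c(0,b,0) = 1 = q^{0 \cdot b}$. For the inductive step, assuming $c(a,b,0) = q^{ab}$, one gets $c(a+1,b,0) = q^{ab} \cdot q^b = q^{(a+1)b}$, which closes the induction.

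For part (2), I would use recurrence (3) of Proposition \ref{ayuda}, namely $c(a+1,b,a+1) = c(a,b,a)[b+a]$. The base case $a=0$ is $c(0,b,0) = 1$, interpreted as the empty product $[b]^{(0)}$. For the inductive step, assuming $c(a,b,a) = [b][b+1]\cdots[b+a-1]$, the recurrence yields
\[
c(a+1,b,a+1) = [b][b+1]\cdots[b+a-1] \cdot [b+a] = [b]^{(a+1)},
\]
as desired.

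Since the two relevant recurrences sit on the extreme corners $k=0$ and $k=a$ of the triangular recursion, they decouple completely from the interior recurrence (1) of Proposition \ref{ayuda}, so no case analysis is needed and there is no genuine obstacle. The only point worth flagging is the convention $[b]^{(0)} = 1$ (empty product) needed to make the base case of part (2) match $c(0,b,0) = 1$.
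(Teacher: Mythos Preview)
Your proof is correct and follows essentially the same approach as the paper: induction on $a$ using the boundary recurrences (2) and (3) of Proposition~\ref{ayuda}. The only difference is that the paper starts the induction at $a=1$, using the values $c(1,b,0)=q^b$ and $c(1,b,1)=[b]$ already computed in the proof of Proposition~\ref{ayuda}, whereas you start at $a=0$ via $c(0,b,k)=\delta_{0,k}$ and the empty-product convention $[b]^{(0)}=1$; this is an immaterial variation.
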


\begin{proof}
Clearly $c(1,b,0)=q^{b}.$ Moreover by Proposition
\ref{ayuda} we have that
$$c(a+1,b,0)=c(a,b,0)q^b=q^{ab}q^b=q^{(a+1)b}.$$
For $a=1$ we have $c(1,b,1)=[b]^{(1)}=[b],$ and using again Proposition
\ref{ayuda} we get
$$c(a+1,b,a+1)=c(a,b,a)[b+a]=[b]^{(a)}[b+a]=[b]^{(a+1)}.$$
\end{proof}

We are ready to discuss the combinatorial interpretation of the
normal polynomials $c(a,b,k)$. Let $P_k[[1,a]]$ be the set of
subsets of $[[1,a]]$ with $k$ elements. We define a $q$-weight
$$\omega_b:P_k[[1,a]]
\longrightarrow \mathbb{N}[q]$$ which sends $A \in P_k[[1,a]]$
into
$${\displaystyle\omega_b(A)= [b]^{(k)}q^{(a-k)b}q^{\sum_{i\in
A^{c}}|A_{<i}|}.}$$

\begin{thm}\label{sm}{\em For $(a,b) \in \mathbb{N}\times \mathbb{N}_+$  and $0 \leq k \leq
a,$ we have that $c(a,b,k)=|P_k[[1,a]], \omega_b |$.}
\end{thm}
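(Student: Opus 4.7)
The plan is to prove this by induction on $a$, matching the recurrence for $c(a,b,k)$ from Proposition \ref{ayuda}. Since $c$ is completely determined by that recurrence together with the initial condition $c(0,b,k)=\delta_{0,k}$, it suffices to check that $|P_k[[1,a]],\omega_b|$ satisfies the same recurrence and the same base case.

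For the base case $a=0$, only $k=0$ contributes: $P_0[[1,0]]=\{\emptyset\}$, and $\omega_b(\emptyset)=[b]^{(0)}q^{0}q^{0}=1=\delta_{0,0}$, matching $c(0,b,k)$. For the inductive step, I would split $P_k[[1,a+1]]$ according to whether $a+1$ belongs to the subset or not, and compare weights. Write $A^{c}$ for complements inside the ambient ground set, which changes between $[[1,a]]$ and $[[1,a+1]]$.

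\textbf{Case $a+1\notin A$.} Then $A\in P_k[[1,a]]$, and $A^{c}\cap[[1,a+1]]=(A^{c}\cap[[1,a]])\cup\{a+1\}$. Since $|A_{<a+1}|=|A|=k$, the extra index contributes $q^{k}$ to the exponent, while the leading factor $q^{((a+1)-k)b}$ gains an extra $q^{b}$ compared to the $[[1,a]]$-weight. The upshot is
\[
\omega_b(A)\big|_{[[1,a+1]]}=q^{b+k}\,\omega_b(A)\big|_{[[1,a]]}.
\]

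\textbf{Case $a+1\in A$.} Let $A'=A\setminus\{a+1\}\in P_{k-1}[[1,a]]$. Here $A^{c}\cap[[1,a+1]]=(A')^{c}\cap[[1,a]]$, and for every $i\leq a$ in this set, $|A_{<i}|=|A'_{<i}|$ because $a+1\not<i$; thus the $q$-sum is unchanged. Meanwhile $[b]^{(k)}=[b+k-1][b]^{(k-1)}$ and $q^{((a+1)-k)b}=q^{(a-(k-1))b}$, so
\[
\omega_b(A)\big|_{[[1,a+1]]}=[b+k-1]\,\omega_b(A')\big|_{[[1,a]]}.
\]

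Summing the two cases gives
\[
|P_k[[1,a+1]],\omega_b|=q^{b+k}\,|P_k[[1,a]],\omega_b|+[b+k-1]\,|P_{k-1}[[1,a]],\omega_b|,
\]
which is exactly the recurrence for $c(a+1,b,k)$ in the range $1\leq k\leq a$. The extreme cases $k=0$ and $k=a+1$ drop out automatically from the same decomposition (only one of the two cases occurs, and it reproduces the boundary recurrences of Proposition \ref{ayuda}). Combined with the inductive hypothesis $|P_k[[1,a]],\omega_b|=c(a,b,k)$, this yields the theorem.

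The only delicate step is the bookkeeping of the exponent $\sum_{i\in A^{c}}|A_{<i}|$ under the decomposition: one must see that adjoining $a+1$ to the ambient set contributes exactly $q^{k}$ when $a+1$ is excluded and contributes nothing when $a+1$ is included. Everything else is mechanical once that is observed.
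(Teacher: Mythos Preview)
Your proof is correct and follows essentially the same route as the paper: induction on $a$, splitting $P_k[[1,a+1]]$ according to whether $a+1$ belongs to $A$, and checking that the resulting two terms reproduce the recursion of Proposition~\ref{ayuda} with the same initial condition $c(0,b,k)=\delta_{0,k}$. Your explicit bookkeeping of the exponent $\sum_{i\in A^c}|A_{<i}|$ in each case is exactly the computation the paper performs (more tersely), so there is nothing to add.
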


\begin{proof}
We have to show that
$${\displaystyle c(a,b,k)=|P_k(a), \omega_b|=[b]^{(k)}q^{(a-k)b}\sum_{A\in P_k[[1,a]]} q^{ \sum_{i\in
A^{c}}|A_{<i}|}}.$$

Let $\overline{c}(a,b,k)$ be given by the
right hand side of formula above for $a \geq 1$ and
$\overline{c}(0,b,k) = \delta_{0, k}$. We must show that
$\overline{c}(a,b,k)= c(a,b,k).$ Since $\overline{c}(0,b,k)=
c(0,b,k),$ it is enough to show that $\overline{c}(a,b,k)$
satisfies, for $1\leq k
\leq a,$ the recursion
$$\overline{c}(a+1,b,k)=\overline{c}(a,b,k)q^{b+k} + \overline{c}(a,b,k-1)[b+k-1].$$
Sets $A \in P_k[[1, a+1]]$ are classified in two blocks according
to whether $ a+1
\notin A$ or $a+1 \in A.$ Thus we obtain that
$${\displaystyle \overline{c}(a+1,b,k)=|P_k(a+1), \omega_b|=[b]^{(k)}q^{(a-k+1)b}\sum_{A\in P_k[[1,a+1]]} q^{ \sum_{i\in
A^{c}}|A_{<i}|}}$$ is equal to the sum of two terms
$$\left( [b]^{(k)}q^{(a-k)b}\sum_{A\in P_k[[1,a]]} q^{ \sum_{i\in
A^{c}}|A_{<i}|} \right)q^{b+k} \ \ +$$
$$\left( [b]^{(k-1)}q^{(a-k+1)b}\sum_{A\in P_{k-1}[[1,a]]} q^{ \sum_{i\in
A^{c}}|A_{<i}|} \right)[b + k -1].$$ Thus the numbers
$\overline{c}(a,b,k)$ satisfy the required recursion.
\end{proof}

Let us remark that writing $A \in P_k[[1,a]]$ as $A=\{t_1 < t_2 <
\cdots < t_k  \}$, using the elementary identity
$$\sum_{i\in A^{c}}|A_{<i}|=\sum_{s=1}^k s (t_{s+1} - t_s  -1)$$
and setting $t_{k+1}=a+1$ we obtain that:
$$c(a,b,k)=[b-1]^{(k)}q^{(a-k)b} \sum_{1 \leq t_1 <\cdots < t_k \leq a}
q^{\sum_{s=1}^k s (t_{s+1} - t_s  -1)} .$$

\section{Normal polynomials and symmetric powers of $MW_q$}

In this section we find explicit formulae for the normal
polynomials of the algebra $MW_q$. We also begin the study of the
symmetric power of that algebra.

\begin{defi}\label{mw}{\em Let $A=(A_1,\cdots,A_n)\in
(\mathbb{N}^{2})^{n}$ with $A_i=(a_i,b_i)$. The normal polynomial
$ N(A,k,q) \in \mathbb{N}[q]$ is defined by the following identity
in $MW_q$: $$\prod_{i=1}^nx^{a_i}y^{b_i}=
\sum_{k=0}^{|b|} N(A,k,q)x^{|a|+k}y^{|b|-k}.$$ For $k>|b|$ we set $N(A,k,q)=0$.}
\end{defi}

Recall from Section 2 that the notation $p \vdash k$ means that $p$ is a vector
$(p_1,\cdots,p_{n-1}) \in \mathbb{N}^{n-1}$ such
that ${\displaystyle |p|=\sum_{i=1}^{n-1}p_i=k}$.
Our next result is obtained using  Definition \ref{nc} several times.

\begin{thm}\label{juju}{\em For $(A,k) \in (\mathbb{N}^{2})^{n}
\times \mathbb{N}$ we have that $$
N(A,k,q)=\sum_{p
\vdash k}\prod_{i=1}^{n-1} c( b_i,|a_{>i}|+|p_{>i}|,p_i),$$ where the partition $p$ of $k$ must be
such that
$0
\leq p_i\leq b_i$ for $i \in [[1, n-1]]$.}
\end{thm}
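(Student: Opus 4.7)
The plan is to prove the formula by induction on $n$, peeling off the leftmost factor $x^{a_1}y^{b_1}$ and applying Definition \ref{nc} once to rewrite $y^{b_1}$ across the remaining block of $x$'s that has already been brought into normal form by the inductive hypothesis.

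For the base case $n=1$ the product $x^{a_1}y^{b_1}$ is already normal, so $N(A,0,q)=1$ and $N(A,k,q)=0$ for $k\geq 1$; on the right-hand side the only admissible partition is the empty vector in $\mathbb{N}^{0}$, giving the empty product $1$ when $k=0$, matching the definition.

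For the inductive step, write $A=(A_1,A_2,\dots,A_n)$ and let $A'=(A_2,\dots,A_n)$, $a'=(a_2,\dots,a_n)$, $b'=(b_2,\dots,b_n)$. The inductive hypothesis gives
$$\prod_{i=2}^{n}x^{a_i}y^{b_i}=\sum_{k'}N(A',k',q)\, x^{|a'|+k'}y^{|b'|-k'}.$$
Multiplying on the left by $x^{a_1}y^{b_1}$ and applying Definition \ref{nc} to $y^{b_1}x^{|a'|+k'}$ yields
$$\prod_{i=1}^{n}x^{a_i}y^{b_i}=\sum_{k',\,p_1}N(A',k',q)\, c(b_1,|a'|+k',p_1)\, x^{|a|+k'+p_1}y^{|b|-k'-p_1}.$$
Setting $k=k'+p_1$ and collecting the coefficient of $x^{|a|+k}y^{|b|-k}$ gives
$$N(A,k,q)=\sum_{p_1}c(b_1,|a'|+k-p_1,p_1)\, N(A',k-p_1,q).$$

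The final step is a relabeling. For a vector $p=(p_1,\dots,p_{n-1})\vdash k$, set $p'=(p_2,\dots,p_{n-1})$; then $p'\vdash(k-p_1)$ and $|p_{>1}|=k-p_1=|a'|-|a_{>1}|$ is just a restatement of the identity $|a_{>1}|=|a'|$, so $c(b_1,|a'|+k-p_1,p_1)=c(b_1,|a_{>1}|+|p_{>1}|,p_1)$. Applied to $A'$ the indices shift by one: writing $a'_{>i}=a_{>i+1}$ and $p'_{>i}=p_{>i+1}$, the inductive formula for $N(A',k-p_1,q)$ reads
$$N(A',k-p_1,q)=\sum_{p'\vdash(k-p_1)}\prod_{j=2}^{n-1}c(b_j,|a_{>j}|+|p_{>j}|,p_j).$$
Substituting this into the recurrence above and factoring out the $i=1$ term exhibits $N(A,k,q)$ as a sum over all $p\vdash k$ with $0\leq p_i\leq b_i$ of $\prod_{i=1}^{n-1}c(b_i,|a_{>i}|+|p_{>i}|,p_i)$, which is the desired identity.

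The main obstacle is purely bookkeeping: verifying the index shift $p'_i \leftrightarrow p_{i+1}$, $|a'_{>i}|=|a_{>i+1}|$, $|p'_{>i}|=|p_{>i+1}|$, and the initial identification $|a_{>1}|+|p_{>1}|=|a'|+k-p_1$, so that the two levels of the recursion merge into a single sum over partitions $p$ of $k$ of length $n-1$. No genuinely new algebraic content beyond Definition \ref{nc} is needed.
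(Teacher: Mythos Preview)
Your argument is correct and is precisely what the paper has in mind: the paper's entire proof is the remark that the result ``is obtained using Definition~\ref{nc} several times,'' and your induction on $n$ peeling off the leftmost factor is the natural way to organize that repeated application. One sentence in your relabeling paragraph is garbled, though: you write ``$|p_{>1}|=k-p_1=|a'|-|a_{>1}|$,'' but $|a'|-|a_{>1}|=0$; what you actually need (and use correctly in the next line) are the two separate facts $|a_{>1}|=|a'|$ and $|p_{>1}|=k-p_1$, which combine to give $|a_{>1}|+|p_{>1}|=|a'|+k-p_1$.
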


It is not hard to show that the normal polynomial may also be
computed via the identity $$N(A,k,q)=\sum_{p
\vdash k}\prod_{i=1}^{n-1} c(|b_{\leq
i}|-|p_{<i}|,a_{i+1},p_i),$$ where $0 \leq p_i\leq |b_{\leq
i}|-|p_{< i}|$ for $i \in [[1, n-1]]$.\\

Applying Theorem $\ref{juju}$, specialized in the representation
$\rho$, to $x^{-t}$ we obtain that if $(a,b,t)\in
\mathbb{N}^{n}\times \mathbb{N}^{n} \times \mathbb{N}_+$ then
$$\prod_{i=1}^n [t+|b_{\geq i}|+|a_{>i}|-1]=
\sum_{k=0}^{|b|}\left(\sum_{p
\vdash k}\prod_{i=1}^{n-1} c(
b_i,|a_{>i}|+|p_{>i}|,p_i)\right)[t+|b|-k-1],$$ where $0 \leq
p_i\leq b_i$ for $i\in [[1,n-1]]$.\\

Using the alternative expression for $N(A,k,q)$ given above, one
obtains that:
$$\prod_{i=1}^n [t+|b_{\geq i}|+|a_{>i}|-1]=
\sum_{k=0}^{|b|}\left(\sum_{p
\vdash k}\prod_{i=1}^{n-1} c(|b_{\leq
i}|-|p_{<i}|,a_{i+1},p_i)\right)[t+|b|-k-1],$$
 where $0 \leq p_i\leq |b_{\leq
i}|-|p_{< i}|$ for $i \in [[1, n-1]]$.\\

If instead of $\rho$ we use the representation $\iota$ applied to
$x^{t}$ we get the identity:
$$\frac{1}{\prod_{i=1}^{n}[t + |a_{{\geq i}}| + |b_{\geq i}| + 1]^{(a_i)} }= \sum_{k=0}^{|b|}
\left(\sum_{p
\vdash k}\prod_{i=1}^{n-1} c( b_i,|a_{>i}|+|p_{>i}|,p_i)\right)
\frac{1}{[t+|a|+|b|]^{(|a|+k)}},$$ where $0 \leq p_i\leq b_i$ for
$i \in [[1, n-1]]$.\\

Also with the alternative expression for $N(A,k,q)$  we get:
$$\frac{1}{\prod_{i=1}^{n}[t + |a_{{\geq i}}| + |b_{\geq i}| + 1]^{(a_i)} }= \sum_{k=0}^{|b|}
\left(\sum_{p \vdash k}\prod_{i=1}^{n-1} c(|b_{\leq i}|-|p_{<i}|,a_{i+1},p_i)\right)
\frac{1}{[t+|a|+|b|]^{(|a|+k)}},$$ where $0 \leq p_i\leq |b_{\leq
i}|-|p_{< i}|$ for $i \in [[1, n-1]]$.\\

Next we provide explicit formulae for the products of several
elements in the $n$-th symmetric power $\sym^{n}(MW_q)$ of the
$q$-meromorphic Weyl algebra $MW_q$.

\begin{thm}{\em
For each map $(a,b):[[1,m]]\!\times\! [[1,n]]\longrightarrow \mathbb{N}^{2}$
the following identity holds in $\sym^{n}(MW):$
\begin{equation*}
{\des (n!)^{m-1}\prod_{i=1}^{m}\overline{\prod_{j=1}^{n}
x_j^{a_{ij}}y_j^{b_{ij}}}=\sum_{\sigma,k,p}\left(\prod_{l=1}^{m-1}\prod_{j=1}^{n} c((b_j^{\sigma})_l,|(a_j^{\sigma})_{>l}
|+|p^j_{>l}|,p_{l}^j)\right)
\overline{\prod_{j=1}^{n} x_j^{|a_j^{\sigma}| + k_j}y_j^{|b_j^{\sigma}|-k_j}}.}
\end{equation*}

In the formula above we are using the following conventions:
$\sigma\in {\{1 \} \times S_n^{m-1}}$, $k\in\mathbb{N}^{n}$ is such that
$k_j \leq |b_j^{\sigma}|$, $p=(p^1,...,p^{n})\in (\mathbb{N}^{m-1})^{n}$, $p^j=( p^j_1,...,p_{m-1}^j )$,
$a_j^{\sigma}=(a_{1\sigma_1^{-1}(j)},..., a_{m\sigma_m^{-1}(j)}),$ and
$b_j^{\sigma}=(b_{1\sigma_1^{-1}(j)},..., b_{m\sigma_m^{-1}(j)})$

}
\end{thm}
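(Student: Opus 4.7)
The plan is to derive this formula by composing two results already at our disposal: the general product rule for symmetric powers of an associative algebra (recalled in Section~2 from \cite{DP2}) and the normalization formula for $MW_q$ given by Theorem~\ref{juju}. Roughly, the first tool rewrites a product of $m$ symmetrized tensors as a sum over permutations of a single symmetrized tensor whose $j$-th slot holds an ordinary monomial in $MW_q$; the second tool puts each of those slot-monomials into normal form.

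First I would start from the general identity
$$n!^{m-1}\prod_{i=1}^{m}\overline{\bigotimes_{j=1}^{n}a_{ij}}=\sum_{\sigma\in\{1\}\times S_n^{m-1}}\overline{\bigotimes_{j=1}^{n}\prod_{i=1}^{m}a_{i\sigma_i^{-1}(j)}}$$
and specialize it to $a_{ij}=x^{a_{ij}}y^{b_{ij}}\in MW_q$. For a fixed $\sigma$ and fixed $j$, the $j$-th tensor factor becomes the ordinary product $\prod_{i=1}^{m}x^{a_{i\sigma_i^{-1}(j)}}y^{b_{i\sigma_i^{-1}(j)}}$, which in the notation of the theorem is $\prod_{i=1}^{m}x^{(a_j^{\sigma})_i}y^{(b_j^{\sigma})_i}$.

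Next I would apply Theorem~\ref{juju} to that product, with the pair $(a,b)$ there replaced by $(a_j^{\sigma},b_j^{\sigma})$ and with $n$ replaced by $m$. This yields
$$\prod_{i=1}^{m}x^{(a_j^{\sigma})_i}y^{(b_j^{\sigma})_i}=\sum_{k_j=0}^{|b_j^{\sigma}|}\sum_{p^j\vdash k_j}\left(\prod_{l=1}^{m-1}c((b_j^{\sigma})_l,|(a_j^{\sigma})_{>l}|+|p^j_{>l}|,p^j_l)\right)x^{|a_j^{\sigma}|+k_j}y^{|b_j^{\sigma}|-k_j},$$
with the constraint $0\le p^j_l\le (b_j^{\sigma})_l$. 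Substituting this expansion inside the $j$-th tensor slot and collecting the $n$ independent sums into a single sum over $k=(k_1,\dots,k_n)$ and $p=(p^1,\dots,p^n)$ produces exactly the right-hand side of the claimed formula.

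The only real work is bookkeeping: checking that the definition of $a_j^{\sigma},b_j^{\sigma}$ used in Theorem~\ref{juju} matches what we feed in (a direct unpacking), and that multilinearity of $\overline{\bigotimes_j(\cdot)}$ in each slot lets us pull the finite sums $\sum_{k_j,p^j}$ outside the tensor product and merge them into a single sum indexed by $(\sigma,k,p)$. I do not expect a genuine obstacle; the main risk is purely notational, namely making sure that the iterated indexing $(a_j^{\sigma})_{>l}$, $p^j_{>l}$ in the statement of Theorem~\ref{juju} is transcribed correctly into the $j$-indexed family. Once that is in place, the proof reduces to the two-step substitution described above.
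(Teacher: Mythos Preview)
Your proposal is correct and is exactly the argument the paper has in mind: the paper does not spell out a proof of this theorem, but its structure makes clear that it follows by substituting the symmetric-power product rule from \cite{DP2} (recalled in Section~2) and then normalizing each tensor slot via Theorem~\ref{juju}. Your bookkeeping is accurate, including the identification of the $j$-th slot with the monomial $\prod_{i=1}^{m}x^{(a_j^{\sigma})_i}y^{(b_j^{\sigma})_i}$ and the use of multilinearity to pull the sums over $k_j$ and $p^j$ outside the symmetrized tensor.
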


The explicit computation of products in $\sym^{n}(MW_q)$ is rather
difficult as the following example shows.
\begin{exa}{\em
For $n=2, m=2$ we have
$$2(x_1y_1x_2^2y_2)(x_1^2y_1^{2}x_2y_2)=x_1y_1x_2^2y_2x_1^2y_1^2x_2y_2+x_1y_1x_2^2y_2x_1y_1x_2^2y_2^2=$$
$$q^{3}x_1^{3}y_1^{3}x_2^{3}y_2^{2}+q^{2}x_1^{3}y_1^{3}x_2^{4}y_2+(q^{2}+q)x_1^{4}y_1^{2}x_2^{3}y_2^{2}
+(q+1)x_1^{4}y_1^{2}x_2^{4}y_2+$$
$$q^{3}x_1^{2}y_1^{2}x_2^{4}y_2^{3}+(q^{2}+q)x_1^{2}y_1^{2}x_2^{5}y_2^2+q^{2}x_1^{3}y_1x_2^{4}y_2^{3}
+(q+1)x_1^{3}y_1x_2^{5}y_2^2.$$}
\end{exa}

We close this work mentioning a couple of research problems. First, it would be interesting to study the Hochschild
cohomology of the meromorphic and $q$-meromorphic Weyl algebras and
their corresponding symmetric powers along the lines developed in
\cite{Alev1, Alev2}. Second, using techniques introduced in
\cite{RE} and further developed in \cite{Blan1, Blan12, Blan2,
cas1} we have constructed a categorification of the  Weyl algebra,
and more generally of the Kontsevich  star product
\cite{K} for Poisson structures on $\mathbb{R}^n.$ It would be
interesting to study the categorification of the meromorphic and
$q$-meromorphic Weyl algebras.

\subsection*{Acknowledgements}

Our thanks to Nicolas Andruskiewitsch, Takashi Kimura and Sylvie
Paycha. We also thank a couple of anonymous
referees for helpful suggestions and comments. The second author thanks the organizing committee for
inviting her to participate in the  "XVII Coloquio Latinoamericano
de Algebra" held in Medell\'in, Colombia, 2007.

\bigskip

\bigskip

\noindent ragadiaz@gmail.com\\
\noindent Facultad de Administraci\'on, Universidad
del Rosario, Bogot\'a, Colombia \\

\noindent epariguan@javeriana.edu.co \\
Departamento de Matem\'aticas, Pontificia Universidad Javeriana,
Bogot\'a, Colombia

\end{document}